\newcommand{\annotate}[1]{\ifthenelse{\boolean{annotate}}{\cbstart \textcolor{red}{#1} \cbend}{#1}}
\newcommand{\annotateBar}[1]{\ifthenelse{\boolean{annotate}}{\cbstart #1 \cbend}{#1}}
\newcommand{\annotateText}[1]{\ifthenelse{\boolean{annotate}}{\textcolor{red}{#1}}{#1}}
\theoremstyle{plain}
\newtheorem{theorem}{Theorem}
\newtheorem{lemma}{Lemma}
\newtheorem{corollary}{Corollary}
\theoremstyle{definition}
\theoremstyle{remark}
\newcommand{\reals}{\mathbb{R}}
\newcommand{\vect}[1]{\mbox{\boldmath$#1$}}
\DeclareMathOperator{\cav}{cav}
\DeclareMathOperator{\vex}{vex}
\DeclareMathOperator{\chgap}{chgap}
\DeclareMathOperator{\mcgap}{mcgap}
\DeclareMathOperator{\mcl}{mcl}
\DeclareMathOperator{\mcu}{mcu}
\DeclareMathOperator{\conv}{conv}
\newcommand{\prob}[1]{\textbf{P}\left(#1\right)}
\newcommand{\expect}[1]{\textbf{E}\left(#1\right)}
\title{Bounding the gap between the McCormick relaxation and the convex hull for bilinear functions}
\author[1]{Natashia Boland}
\author[1]{Santanu S. Dey}
\author[2]{Thomas Kalinowski}
\author[3]{Marco Molinaro}
\author[2]{Fabian Rigterink}
\affil[1]{Georgia Institute of Technology, Atlanta, USA} 
\affil[2]{University of Newcastle, Australia}
\affil[3]{PUC-Rio, Brazil}
\begin{document}
\maketitle
\begin{abstract}
  We investigate how well the graph of a bilinear function $b:[0,1]^n\to\reals$ can be approximated
  by its McCormick relaxation. In particular, we are interested in the smallest number $c$ such that
  the difference between the concave upper bounding and convex lower bounding functions obtained
  from the McCormick relaxation approach is at most $c$ times the difference between the concave and
  convex envelopes. Answering a question of Luedtke, Namazifar and Linderoth, we show that this
  factor $c$ cannot be bounded by a constant independent of $n$. More precisely, we show that for a
  random bilinear function $b$ we have asymptotically almost surely $c\geqslant\sqrt
  n/4$. \annotateText{On the other hand, we prove that $c\leqslant 600\sqrt{n}$, which improves the
    linear upper bound proved by Luedtke, Namazifar and Linderoth.} In addition, we
  \annotateText{present an alternative proof for a result of Misener, Smadbeck and Floudas
    characterizing functions $b$ for which the McCormick relaxation is equal to the convex hull.}
\end{abstract}

An important technique in global optimization is the construction of convex envelopes for nonconvex
functions over convex sets (see for instance~\cite{HorstTuy-2013-GlobaloptimizationDeterministic}),
and consequently, there has been a lot of work on such envelopes of special classes of
functions~\cite{Al-Khayyal.Falk_1983_JointlyConstrainedBiconvex,Crama_1993_Concaveextensionsnonlinear,Rikun_1997_ConvexEnvelopeFormula,Sherali_1997_Convexenvelopesmultilinear,TawarmalaniRichardXiong-2012-Explicitconvexand}.
Many modern global optimization
solvers~\cite{Belotti.etal_2009_Branchingandbounds,Sahinidis_1996_BARON:generalpurpose,Smith.Pantelides_1999_symbolicreformulationspatialbranch}
follow a general approach, proposed by McCormick~\cite{McCormick_1976_Computabilityglobalsolutions},
that is based on a linear relaxation for bilinear terms. Luedtke, Namazifar, and
Linderoth~\cite{Luedtke.etal_2012_Someresultsstrength} proved a number of \annotateText{statements} about the strength
of the resulting relaxations for multilinear functions. In this note we extend their results on
bilinear functions. In particular, we characterize the bilinear functions for which the McCormick
relaxation describes the convex hull, \annotateText{we improve the upper bound on this approximation ratio, and we
prove that our new bound is asymptotically tight, thus providing a negative answer to a question
from~\cite{Luedtke.etal_2012_Someresultsstrength}.}

Consider a bilinear function $b:[0,1]^n\to\reals$ given by
\[b(\vect x)=\sum_{ij\in E}a_{ij}x_ix_j\] 
with coefficients $a_{ij}\in\reals$, where $G=(V,E)$ is an undirected
graph with vertex set $V=\{1,\ldots,n\}$, and we write $ij$ for $\{i,j\}$. The graph of $b$ is the
set
\[B=\{(\vect x,z)\in [0,1]^n\times\reals\ :\ z=b(\vect x)\},\] 
and we are interested in relaxations
of the convex hull of $B$, which can be characterized as (see
~\cite{Rikun_1997_ConvexEnvelopeFormula})
\[\conv(B)=\left\{(\vect x,z)\in [0,1]^n\times\reals\ :\ \exists\vect\lambda\in\Delta_{2^n}\ \text{with }\vect
  x=\sum_{k=1}^{2^n}\lambda_k\vect x^k,\ z=\sum_{k=1}^{2^n}\lambda_kb(\vect x^k)\right\},\] where
$\vect x^1$, \ldots, $\vect x^{2^n}$ are the vertices of $[0,1]^n$ and
$\Delta_{2^n}=\{\vect\lambda\in[0,1]^{2^n}\ :\ \sum_{k=1}^{2^n}\lambda_k=1\}$ is the
$(2^n-1)$-simplex. The McCormick relaxation~\cite{McCormick_1976_Computabilityglobalsolutions}
approximates $B$ by introducing for each bilinear term $x_ix_j$ a new variable $y_{ij}$ together
with the constraints $0\leqslant y_{ij}\leqslant x_i$, $y_{ij}\leqslant x_j$ and $y_{ij}\geqslant
x_i+x_j-1$. More precisely, we define two convex polytopes $P=P(G)\subseteq\reals^{n+\lvert E\rvert}$ and
$Q=Q(b)\subseteq\reals^{n+1}$:
\begin{align*}
P &= \{(\vect x,\,\vect y)\in [0,1]^n\times[0,1]^{\annotateText{\lvert E\rvert}}\ :\ y_{ij}\leqslant x_i,\ y_{ij}\leqslant x_j,\ y_{ij}\geqslant
x_i+x_j-1\text{ for all }ij\in E\},\text{ and}\\
Q &= \left\{(\vect x,\,z)\in[0,1]^n\times\reals\ :\ \exists \vect y\in[0,1]^{\annotateText{\lvert E\rvert}} \text{ with } (\vect x,\vect y)\in P\text{ and }z=\sum_{ij\in E}a_{ij}y_{ij}\right\}.   
\end{align*}

\section{Main results}\label{sec:results}
We have $\conv(B)\subseteq Q$ and it is natural to ask how well $Q$ approximates $\conv(B)$. Following the
notation from~\cite{Luedtke.etal_2012_Someresultsstrength} we denote the concave and convex
envelopes of the graph of $b$ by $\cav[b]$ and $\vex[b]$, respectively, and the corresponding upper and
lower McCormick envelopes by $\mcu[b]$ and $\mcl[b]$, respectively. These envelopes are functions from $[0,1]^n$ to
$\reals$ defined by
\begin{align*}
  \cav[b](\vect x) & =\max\{z\ :\ (\vect x,z)\in\conv(B)\}, & \vex[b](\vect x) & =\min\{z\ :\ (\vect x,z)\in\conv(B)\},\\
  \mcu[b](\vect x) & =\max\{z\ :\ (\vect x,z)\in Q\}, & \mcl[b](\vect x) & =\min\{z\ :\ (\vect x,z)\in Q\}. 
\end{align*}
We call the corresponding differences \emph{convex hull gap}, denoted by $\chgap[b]$, and
\emph{McCormick gap}, denoted by $\mcgap[b]$, respectively. In other words,
\begin{align*}
  \chgap[b](\vect x) &= \cav[b](\vect x)-\vex[b](\vect x)\ \ \quad\text{and}&
  \mcgap[b](\vect x) &= \mcu[b](\vect x)-\mcl[b](\vect x).
\end{align*}
Our measure for the quality of $Q$ as an approximation of $\conv(B)$ is the number
\[c^*(b)=\inf\{c\in\reals\ :\ \mcgap[b](\vect x)\leqslant c\chgap[b](\vect x)\text{ for all }\vect
x\in \annotateText{[0,1]^n}\}.\]  
In~\cite{Luedtke.etal_2012_Someresultsstrength} it is proved that \annotateText{under the condition that all
nonzero coefficients are positive we have 
\[c^*(b)\leqslant 2-\frac{1}{\lceil\chi(G)/2\rceil},\]
where $\chi(G)$ is the chromatic number of the graph $G$.
For arbitrary coefficients, the much weaker bound $c^*(b)\leqslant n$ is established,} and it is left as an open question if
$c^*(b)$ can be bounded by a constant independent of $n$ \annotateText{in the general case}. We provide a negative answer to this
question by proving the following theorem.
\begin{theorem}\label{thm:asymptotics}
Let $G=(V,E)$ be the complete graph on the vertex set $V=\{1,\ldots,n\}$, and let $b(\vect
x)=\sum_{ij\in E}a_{ij}x_ix_j$ where the coefficients $a_{ij}$ are chosen independently and
uniformly at random from $\{1,-1\}$. For $\vect x=\left(1/2,1/2,\ldots,1/2\right)$ we have
\[\lim_{n\to\infty}\prob{\mcgap[b](\vect x)\geqslant\frac{\sqrt n}{4}\chgap[b](\vect x)}=1.\]
\end{theorem}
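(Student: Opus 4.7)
The plan is to compute $\mcgap[b](\vect x)$ exactly at $\vect x=(1/2,\ldots,1/2)$ and to upper-bound $\chgap[b](\vect x)$ via the spectral norm of a Wigner-type random matrix.

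I would first show that $\mcgap[b](\vect x)=n(n-1)/4$ deterministically. At this point the McCormick constraints for each edge $ij\in E$ reduce to $0\le y_{ij}\le 1/2$ and decouple across edges; hence the maximum of $\sum_{ij}a_{ij}y_{ij}$ equals $|E_+|/2$ (set $y_{ij}=1/2$ exactly when $a_{ij}=+1$) and the minimum equals $-|E_-|/2$, where $E_\pm=\{ij\in E:a_{ij}=\pm 1\}$. Hence $\mcgap[b](\vect x)=|E|/2=n(n-1)/4$, independent of the realization of the $a_{ij}$'s.

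Next I would encode the bilinear form as $b(\vect x)=\tfrac12\,\vect x^{T}A\vect x$, where $A$ is the symmetric matrix with $A_{ij}=a_{ij}$ for $i\ne j$ and zero diagonal. For any distribution $\mu$ on $\{0,1\}^n$ with marginals $1/2$, setting $C=\mathrm{Cov}_\mu(\vect X)$ gives $\mathbb{E}_\mu[X_iX_j]=1/4+C_{ij}$ for $i\ne j$, with $C\succeq 0$ and $C_{ii}=1/4$. Since $\mathrm{tr}(AC)\le\lambda_{\max}(A)\,\mathrm{tr}(C)=\lambda_{\max}(A)\,n/4$,
\[
\mathbb{E}_\mu[b(\vect X)]=\frac{|E_+|-|E_-|}{4}+\frac12\,\mathrm{tr}(AC)\le\frac{|E_+|-|E_-|}{4}+\frac{n\,\lambda_{\max}(A)}{8}.
\]
Applying the same estimate to $-b$ (which replaces $a\mapsto -a$ and hence $\lambda_{\max}(A)$ by $-\lambda_{\min}(A)$) and adding, the linear terms in $|E_+|-|E_-|$ cancel, yielding
\[
\chgap[b](\vect x)\le\frac{n\bigl(\lambda_{\max}(A)-\lambda_{\min}(A)\bigr)}{8}\le\frac{n\,\|A\|_{\mathrm{op}}}{4}.
\]

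Finally I would invoke standard results on Wigner matrices: since $A$ has i.i.d.\ $\pm 1$ off-diagonal entries and zero diagonal, Bai--Yin's theorem gives $\|A\|_{\mathrm{op}}\le(2+o(1))\sqrt n$ with probability tending to $1$. Combining,
\[
\frac{\mcgap[b](\vect x)}{\chgap[b](\vect x)}\ge\frac{n(n-1)/4}{n(2+o(1))\sqrt n/4}=\frac{n-1}{(2+o(1))\sqrt n}=(1-o(1))\frac{\sqrt n}{2},
\]
which is at least $\sqrt n/4$ for all sufficiently large $n$, as required. The main obstacle is the Wigner-type spectral estimate $\|A\|_{\mathrm{op}}\le(2+o(1))\sqrt n$ (a classical but non-trivial input), and the check that the additive $(|E_+|-|E_-|)/4$ terms, of order $n$ individually, indeed cancel by symmetry between the $\cav$ and $-\vex$ one-sided estimates so that the leading $\sqrt n$ behaviour is not contaminated.
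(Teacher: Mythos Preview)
Your argument is correct, but it takes a genuinely different route from the paper. The paper first invokes Lemma~\ref{lem:chgap} to identify $\chgap[b](1/2,\ldots,1/2)=\tfrac12(\mu^+(V)-\mu^-(V))$, i.e.\ half the spread of cut values in the weighted graph, and then bounds this spread by an elementary Chernoff estimate on each fixed cut together with a union bound over all $2^{n-1}$ cuts, obtaining $\mu^+(V)-\mu^-(V)\leqslant 1.2\,n^{3/2}$ with probability tending to~$1$. Your approach bypasses the cut formulation entirely: you relax the convex-hull envelope to an SDP-type bound by observing that any distribution on $\{0,1\}^n$ with half marginals has a psd covariance matrix of trace $n/4$, and then feed in the spectral norm of the random sign matrix via Bai--Yin. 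This is a clean and natural line of attack; it yields a slightly better asymptotic constant ($\sqrt n/2$ versus roughly $\sqrt n/4$) at the cost of importing a heavier random-matrix result. The paper's proof, by contrast, is self-contained and elementary, needing only the Chernoff bound, and it dovetails with the surrounding development (Lemma~\ref{lem:chgap} and Corollary~\ref{cor:zero_half}) which recasts everything in terms of cut weights --- the same language used later for Theorems~\ref{thm:upper_bound} and~\ref{thm:equality}.
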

\annotateBar{Moreover, we show that $\sqrt{n}$ is the correct leading term for the asymptotics. 
\begin{theorem}\label{thm:upper_bound}
For every bilinear function $b:[0,1]^n\to\reals$, and every $\vect x\in[0,1]^n$,
\[\mcgap[b](\vect x)\leqslant 600\sqrt n\chgap[b](\vect x).\]
\end{theorem}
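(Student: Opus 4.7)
The plan combines the Luedtke--Namazifar--Linderoth (LNL) constant-factor bound for bilinear functions with non-negative coefficients, via a sign-based decomposition of $b$, with a randomised lifting argument that accounts for the $\sqrt n$ factor.

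\emph{Setup and reduction.} The substitution $x_i\mapsto 1-x_i$ (which turns $a_{ij}x_ix_j$ into $a_{ij}x_j - a_{ij}x_ix_j$; the new linear term affects neither $\mcgap$ nor $\chgap$) preserves both gaps. Applying this for every $i$ with $x_i>1/2$, we may assume $x_i\leqslant 1/2$ for all $i$, in which case a direct calculation gives $\mcgap[b](\vect x)=\sum_{ij\in E}|a_{ij}|\min(x_i,x_j)$. Write $b=b^+-b^-$, where $b^+$ collects the positive-coefficient terms and $b^-$ collects (minus) the negative-coefficient ones, so that both $b^\pm$ have non-negative coefficients. Because the optimal $\vect y$ in the McCormick relaxation is determined edge-wise by the sign of $a_{ij}$, we have $\mcgap[b]=\mcgap[b^+]+\mcgap[b^-]$; combined with the LNL bound $\mcgap[b^\pm]\leqslant 2\chgap[b^\pm]$, this reduces Theorem~\ref{thm:upper_bound} to proving
\[\chgap[b^+](\vect x)+\chgap[b^-](\vect x)\leqslant 300\sqrt n\,\chgap[b](\vect x).\tag{$\ast$}\]

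\emph{Proving $(\ast)$ by randomised lifting.} Let $(\mu^\pm,\nu^\pm)$ be probability distributions on $\{0,1\}^n$, each with mean $\vect x$, that witness $\chgap[b^\pm]$. The goal is to construct a pair $(\mu,\nu)$ with mean $\vect x$ and $E_\mu[b]-E_\nu[b]\gtrsim(\chgap[b^+]+\chgap[b^-])/\sqrt n$. I would introduce auxiliary randomness---either random signs $\epsilon\in\{-1,+1\}^n$ or a random subset $S\subseteq[n]$ of size $\Theta(\sqrt n)$---and use it to mix the $b^+$- and $b^-$-witnesses, for instance by putting $\mu^+$-mass on coordinates indexed by $S$ and $\nu^-$-mass on the complement, then averaging over $S$. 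The cross contributions, in which a witness designed for $b^+$ is evaluated against $b^-$ (or vice versa), have standard deviation $\Theta(\sqrt n)$ by a Khintchine-type estimate, and this is exactly the source of the $1/\sqrt n$ loss.

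\emph{Main obstacle.} The hard step is $(\ast)$. A naive convex combination of $\mu^+$ with $\nu^-$ produces only a signed gain $\chgap[b^+]-\chgap[b^-]$ plus cross terms that can destroy the gap entirely. The randomised construction above must be balanced so that only a $\sqrt n$-scale fluctuation survives as cancellation while a quadratic-mean-type gain of $(\chgap[b^+]+\chgap[b^-])/\sqrt n$ is preserved, and maintaining the mean constraint $E_\mu[\vect V]=\vect x$ throughout the mixing requires delicate correction steps. Theorem~\ref{thm:asymptotics} confirms that this $\sqrt n$ cancellation rate is unavoidable (it is realised by random $\pm 1$ coefficients), so the construction must be quantitatively sharp to yield the constant $600$.
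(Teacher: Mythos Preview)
Your reduction to $(\ast)$ is correct: the substitution and the edge-wise decomposition $\mcgap[b]=\mcgap[b^+]+\mcgap[b^-]$ are valid, and the LNL bound $\mcgap[b^\pm]\leqslant 2\chgap[b^\pm]$ does hold for non-negative coefficients. The problem is that $(\ast)$ is not actually easier than the theorem itself. At $\vect x=(1/2,\ldots,1/2)$, the paper's Lemma~\ref{lem:chgap} gives $\chgap[b]=\tfrac12(\mu^+(V)-\mu^-(V))$, and since the empty cut has value $0$ we have $\mu^+(V)-\mu^-(V)\geqslant\lvert\sum_{ij\in\delta(U)}a_{ij}\rvert$ for every $U$; on the other side $\chgap[b^+]+\chgap[b^-]\leqslant\tfrac12\sum_{ij}\lvert a_{ij}\rvert$. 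So $(\ast)$ at this single point already requires: \emph{every edge-weighted $K_n$ has a cut of absolute weight at least $c\sum_{ij}\lvert a_{ij}\rvert/\sqrt n$}. That is precisely the paper's Theorem~\ref{thm:large_cuts}, and it is the whole difficulty; your decomposition has relocated the problem rather than reduced it.

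The paper's route is different in two respects. First, it never works with general $\vect x$ or with abstract witness measures: a convexity argument (Corollary~\ref{cor:zero_half}) reduces everything to $\vect x\in\{0,1/2,1\}^n$, where $\chgap$ is \emph{exactly} the cut range $\tfrac12(\mu^+(T_f)-\mu^-(T_f))$ and $\mcgap$ is $\tfrac12\sum_{ij\in\gamma(T_f)}\lvert a_{ij}\rvert$. This converts the analytic inequality into a purely combinatorial discrepancy statement about cuts. Second, that discrepancy statement is then proved by a concrete construction: start from a bisection $V=L\cup R$ carrying half the absolute weight, take a \emph{uniformly random} $S\subseteq L$ (each vertex with probability $1/2$, not a set of size $\Theta(\sqrt n)$), apply Paley--Zygmund together with the Khintchine/Haagerup fourth-moment bound to get per-vertex anticoncentration of $\sum_{i\in S}a_{ij}$, sum over $j\in R$ using Cauchy--Schwarz, and finish with a three-way case split on the signs of the weights between $S$, $R_+$, and the remainder to extract an actual cut.

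Your ``randomised lifting'' sketch does not yet contain these ingredients. The phrase ``putting $\mu^+$-mass on coordinates indexed by $S$ and $\nu^-$-mass on the complement'' is not well-defined (the $\mu^\pm$ are measures on $\{0,1\}^n$, not on coordinates, and splicing marginals from two different measures destroys the correlations that make them witnesses), and the claim that cross terms have ``standard deviation $\Theta(\sqrt n)$'' presupposes exactly the anticoncentration estimate that constitutes the proof. As written, the proposal identifies the right order of magnitude but leaves the essential step---finding a single cut (equivalently, a single pair of $\{0,1\}$-valued witnesses) of large absolute weight---unaddressed.
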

In order to prove Theorem~\ref{thm:upper_bound} we establish the following discrepancy result which
might be of independent interest.
\begin{theorem}\label{thm:large_cuts}
Let $G=(V,E)$ be the complete graph on the vertex set $V=\{1,\ldots,n\}$, and let $\vect
a=(a_{ij})\in\reals^{n(n-1)/2}$ be a vector of edge weights. There exists a set $U\subseteq V$ such that
\[\left\lvert \sum_{ij\in\delta(U)}a_{ij}\right\rvert\geqslant\frac{1}{600\sqrt
  n}\sum_{ij\in E}\lvert a_{ij}\rvert\]
where the sum on the LHS is over the set $\delta(U)\subset E$ of edges with exactly one
vertex in $U$.
\end{theorem}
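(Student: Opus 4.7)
The plan is to translate the cut problem into a quadratic optimization over $\{\pm1\}^n$. Identify $U\subseteq V$ with $\vect\epsilon\in\{\pm1\}^n$ via $\epsilon_i=+1\iff i\in U$, set $S=\sum_{ij\in E}a_{ij}$, $Q(\vect\epsilon)=\sum_{ij\in E}a_{ij}\epsilon_i\epsilon_j$, and write $W=\sum_{ij\in E}|a_{ij}|$ for the total absolute edge weight. Since $\epsilon_i\epsilon_j=-1$ on cut edges and $+1$ elsewhere, $\sum_{ij\in\delta(U)}a_{ij}=(S-Q(\vect\epsilon))/2$, so the theorem reduces to exhibiting $\vect\epsilon\in\{\pm1\}^n$ (or a uniformly random cut) for which $|S-Q(\vect\epsilon)|$ is of order $W/\sqrt n$.

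First I would bound the associated bilinear form from below. Let $M$ be the symmetric matrix with $M_{ij}=a_{ij}$ for $i\neq j$ and $M_{ii}=0$. For a uniformly random $\vect\epsilon\in\{\pm1\}^n$,
\[\max_{\vect\epsilon,\vect\delta\in\{\pm1\}^n}\vect\epsilon^\top M\vect\delta=\max_{\vect\epsilon}\|M\vect\epsilon\|_1\geq\mathbb{E}_{\vect\epsilon}\|M\vect\epsilon\|_1=\sum_i\mathbb{E}\bigl|(M\vect\epsilon)_i\bigr|\geq\frac{1}{\sqrt 2}\sum_i\|M_{i,\cdot}\|_2,\]
where the last step is the $L^1$ Khintchine inequality applied row-wise. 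The Cauchy-Schwarz bound $\|M_{i,\cdot}\|_2\geq\|M_{i,\cdot}\|_1/\sqrt{n-1}$ combined with $\sum_i\|M_{i,\cdot}\|_1=2W$ then yields $\max_{\vect\epsilon,\vect\delta}\vect\epsilon^\top M\vect\delta\geq c_1 W/\sqrt n$ for an absolute constant $c_1>0$.

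Next I would pass from the bilinear to the quadratic form. Because $M$ is symmetric the polarization identity $2\vect\epsilon^\top M\vect\delta=(\vect\epsilon+\vect\delta)^\top M(\vect\epsilon+\vect\delta)-\vect\epsilon^\top M\vect\epsilon-\vect\delta^\top M\vect\delta$ holds, and writing $\vect\eta=(\vect\epsilon+\vect\delta)/2\in\{-1,0,1\}^n$ gives $|\vect\epsilon^\top M\vect\delta|\leq 2|\vect\eta^\top M\vect\eta|+\tfrac{1}{2}(|\vect\epsilon^\top M\vect\epsilon|+|\vect\delta^\top M\vect\delta|)\leq 3\max_{\vect\eta\in\{-1,0,1\}^n}|\vect\eta^\top M\vect\eta|$. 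To bring this back to $\{\pm1\}^n$, I extend any such $\vect\eta$ by assigning i.i.d.\ uniform $\pm1$ to its zero coordinates; because $M$ has zero diagonal, the off-support contributions vanish in expectation and $\mathbb{E}[\vect\epsilon^\top M\vect\epsilon]=\vect\eta^\top M\vect\eta$, so the $\{\pm1\}^n$- and $\{-1,0,1\}^n$-maxima coincide. Chaining the estimates, $\max_{\vect\epsilon\in\{\pm1\}^n}|Q(\vect\epsilon)|\geq c_2 W/\sqrt n$ for a universal $c_2>0$.

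Finally, I would split on the size of $|S|$. If $|S|\leq\tfrac{1}{2}\max|Q|$, the cut $U^*$ corresponding to an $\vect\epsilon^*$ attaining the quadratic maximum has $|\sum_{ij\in\delta(U^*)}a_{ij}|=|S-Q(\vect\epsilon^*)|/2\geq\tfrac{1}{4}\max|Q|$; if $|S|>\tfrac{1}{2}\max|Q|$, then a uniformly random cut $U$ (each vertex included independently with probability $1/2$) satisfies $\mathbb{E}[\sum_{ij\in\delta(U)}a_{ij}]=S/2$, so some $U$ achieves $|\sum_{ij\in\delta(U)}a_{ij}|\geq|S|/2>\tfrac{1}{4}\max|Q|$. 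Either way, the cut weight is a constant multiple of $W/\sqrt n$, and collecting the explicit constants from Khintchine, Cauchy-Schwarz and the polarization factor of $3$ yields the claimed factor $1/600$. The main obstacle is the polarization passage: Khintchine's $L^1$ inequality applies only to linear forms in independent Rademacher variables, so producing a \emph{single} $\pm1$ vector giving a large symmetric quadratic value requires both the polarization identity and the random-extension step to identify the maxima on $\{\pm1\}^n$ and on $\{-1,0,1\}^n$.
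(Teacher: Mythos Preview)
Your argument is correct and genuinely different from the paper's. The paper follows the Erd\H{o}s--Goldberg--Pach--Spencer template: it first fixes a bipartition $V=L\cup R$ capturing at least half of $\sum|a_{ij}|$, then takes a random $S\subseteq L$ and proves, via Paley--Zygmund combined with the fourth-moment Khintchine (Haagerup) bound, an anticoncentration estimate $\prob{|\sum_{i\in S}a_{ij}|\geq\tfrac14(\sum_{i\in L}a_{ij}^2)^{1/2}}\geq 1/24$ for each $j\in R$; summing and applying Cauchy--Schwarz gives $\mathbb{E}\sum_{j\in R}|\sum_{i\in S}a_{ij}|\gtrsim W/\sqrt n$, and a final three-way case split on the sets $S$, $R_+$, $R_-$ converts this ``bipartite'' bound into an honest cut $\delta(U)$.

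You instead recast the problem as bounding $|S-Q(\vect\epsilon)|$ for $Q(\vect\epsilon)=\vect\epsilon^\top M\vect\epsilon$, use the $L^1$ Khintchine inequality row-wise to lower-bound the bilinear maximum $\max_{\vect\epsilon,\vect\delta}\vect\epsilon^\top M\vect\delta$, pass to the symmetric quadratic form by polarization plus the random-extension trick identifying the $\{\pm1\}^n$ and $\{-1,0,1\}^n$ maxima, and finish with a two-case split on $|S|$. This is more linear-algebraic and noticeably cleaner: it avoids the Paley--Zygmund detour and the three-set case analysis, and with the sharp Szarek constant $1/\sqrt2$ your explicit constant comes out around $\sqrt2/12$, roughly seventy times better than the paper's $1/600$. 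The paper's route, on the other hand, stays closer to the classical discrepancy argument for unit weights and may be easier to recognize for readers coming from that literature.
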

Finally, we give a characterization of the functions $b$ with with $Q=\conv(B)$.} Let us call an
edge $ij\in E$ positive if $a_{ij}>0$ and negative if $a_{ij}<0$. Without loss of generality we
assume that $a_{ij}\neq 0$ for all $ij\in E$, so every edge is either positive or negative. \annotateText{The
following theorem is a direct consequence of Theorem 3.10
in~\cite{Misener.etal_2014_Dynamicallygeneratedcutting} which states that the McCormick inequalities
are sufficient to describe the convex envelope of the graph of $b$ if and only if the number of
positive edges in every cycle is even. In order to capture the concave envelope as well we just need to
ensure that every cycle also contains an even number of negative edges.}
\begin{theorem}\label{thm:equality}
We have $Q=\conv(B)$ if and only if every cycle in $G$ has an even number of positive edges and
an even number of negative edges.
\end{theorem}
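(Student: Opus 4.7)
The plan is to deduce the theorem by applying the Misener--Smadbeck--Floudas characterization (their Theorem~3.10) twice: once to the function $b$ and once to $-b$.

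Since $B\subseteq Q$ and $Q$ is convex, we have $\conv(B)\subseteq Q$, so the question is when the reverse inclusion holds. For each fixed $\vect x\in[0,1]^n$, the fiber $\{z:(\vect x,z)\in Q\}$ is the interval $[\mcl[b](\vect x),\mcu[b](\vect x)]$, because it is the image of the $\vect x$-slice of $P$ under the linear map $\vect y\mapsto\sum_{ij\in E}a_{ij}y_{ij}$; likewise the fiber of $\conv(B)$ at $\vect x$ is $[\vex[b](\vect x),\cav[b](\vect x)]$. Hence $Q=\conv(B)$ if and only if both $\mcl[b]\equiv\vex[b]$ and $\mcu[b]\equiv\cav[b]$ hold on $[0,1]^n$.

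Theorem~3.10 of Misener et al.\ states precisely that $\mcl[b]\equiv\vex[b]$ if and only if every cycle of $G$ contains an even number of positive edges. To obtain the analogous condition for the upper envelope I would apply the same theorem to the function $-b$. Replacing $b$ by $-b$ negates every coefficient, so the positive edges of $-b$ are exactly the negative edges of $b$, while the McCormick polytope $P(G)$ depends only on $G$ and is unchanged. Using the identities $\vex[-b]=-\cav[b]$ and $\mcl[-b]=-\mcu[b]$ (the latter because only the sign in the objective $z=\sum a_{ij}y_{ij}$ flips), the condition $\mcl[-b]\equiv\vex[-b]$ is equivalent to $\mcu[b]\equiv\cav[b]$. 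Thus, applying Misener et al.\ to $-b$, the second equality holds if and only if every cycle of $G$ contains an even number of negative edges.

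Combining the two conditions gives the characterization in the theorem. I do not expect any serious obstacle: the entire nontrivial content is contained in the cited result, and the only thing to verify is that passing from $b$ to $-b$ cleanly interchanges the roles of the upper and lower envelopes and of positive and negative edges, which is immediate from the definitions above.
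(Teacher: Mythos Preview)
Your proposal is correct, and it is in fact the derivation the paper itself sketches in the paragraph introducing the theorem: apply Misener--Smadbeck--Floudas to $b$ for the lower envelope and to $-b$ for the upper envelope, using $\vex[-b]=-\cav[b]$ and $\mcl[-b]=-\mcu[b]$.

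The paper then gives an \emph{alternative} self-contained proof that avoids citing the external result and instead uses the cut characterization developed earlier (Lemma~\ref{lem:chgap} and Corollary~\ref{cor:zero_half}). For the forward direction, the parity hypotheses are used to show that, for every $X\subseteq V$, the quotient graphs obtained by contracting positive (respectively negative) paths are bipartite; the resulting bipartitions of $X$ exhibit cuts with $\mu^+(X)-\mu^-(X)=\sum_{ij\in\gamma(X)}\lvert a_{ij}\rvert$, whence Corollary~\ref{cor:zero_half} gives $\mcgap[b]\equiv\chgap[b]$. For the converse, a cycle with an odd number of negative edges forces every cut containing all negative edges to pick up a positive edge, so $\mu^+(V)-\mu^-(V)<\sum_{ij\in E}\lvert a_{ij}\rvert$ and Lemma~\ref{lem:chgap} yields a strict gap at $(1/2,\ldots,1/2)$. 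Your route is shorter but treats the Misener et al.\ theorem as a black box; the paper's proof is longer but ties the characterization directly into the discrepancy framework on which the rest of the paper is built.
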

As a consequence, we can have $Q=\conv(B)$ only if $G$ is bipartite. Moreover, if $G$ is a
forest then $Q=\conv(B)$ for every choice of the coefficients $a_{ij}$, but as soon as $G$ contains
a cycle we can write down coefficients $a_{ij}$ such that $Q\neq\conv(B)$.

Our proofs are based on the following ideas from~\cite{Luedtke.etal_2012_Someresultsstrength}.
For a vector $\vect x\in [0,1]^n$, let $T_f=T_f(\vect x)\subseteq V$ be the set of indices of fractional
values, i.e., $T_f=\{i\in V\ :\ 0<x_i<1\}$. The proof of $\mcgap[b](\vect x)\leqslant
n\chgap[b](\vect x)$ for all $\vect x\in H$ in~\cite{Luedtke.etal_2012_Someresultsstrength} proceeds
in 3 steps.
\begin{enumerate}
\item $\displaystyle\vect x\in\{0,1/2,1\}^n\implies\mcgap[b](\vect x)=\frac12\displaystyle\sum_{ij\in E,\, i,j \in T_f}\lvert a_{ij}\rvert$.
\item $\displaystyle\vect x\in\{0,1/2,1\}^n\implies\chgap[b](\vect x)\geqslant\frac{1}{2\lvert T_f\rvert}\displaystyle\sum_{ij\in E,\, i,j \in T_f}\lvert a_{ij}\rvert$.
\item The function $c\chgap[b](\vect x)-\mcgap[b](\vect x)$ is minimized at some $\vect x\in\{0,1/2,1\}^n$.
\end{enumerate}
We will show that the argument for step 2 can be modified to provide a lower bound for
$\chgap[b](\vect x)$ in terms of the difference between the maximum and the minimum cut in the
subgraph of $G$ induced by $T_f$. Theorem~\ref{thm:asymptotics} then follows by applying the
Chernoff inequality, \annotateText{Theorem~\ref{thm:large_cuts}, and consequently
  Theorem~\ref{thm:upper_bound}, is proved using probabilistic arguments that have been
  developed in the context of studying the discrepancy of graphs~\cite{Bollobas.Scott_2006_DiscrepancyGraphsHypergraphs,Erdoes.Spencer_1971_Imbalanceskcolorations,Erdos.etal_1988_Cuttinggraphtwo},} and Theorem~\ref{thm:equality} is a consequence of the
observation that the difference between the maximum and the minimum cut is equal to the sum of the
absolute values of all weights if and only if the sets of positive and negative edges form two cuts
of the graph.

\section{Proofs of the theorems}\label{sec:proofs}
\annotateText{\subsection{Characterizing the convex hull gap in terms of cuts}}\label{subsec:preliminaries}
Let $G=(V,E)$ be a graph with vertex set $V=[n]$. We use the following notation from~\cite{Luedtke.etal_2012_Someresultsstrength}.
\begin{itemize}
\item For $X\subseteq V$, $\gamma(X)$ is the set of edges with both vertices in $X$.
\annotateText{\item For $X\subseteq V$, $\delta(X)$ is the set of edges with exactly one vertex in $X$.}
\item For $X,Y\subseteq V$ with $X\cap Y=\emptyset$, $\delta(X,Y)$ is the set of edges with one
  vertex in $X$ and one vertex in $Y$.
\item For $i\in V$, $\mathcal S_i$ is the collection of vertex sets that contain $i$, i.e.,
  $\mathcal S_i=\{W\subseteq V\ :\ i\in W\}$.
\item For $Z\subseteq E$, we put $a(Z)=\sum_{ij\in Z}a_{ij}$.
\end{itemize}
We denote the maximum and the minimum weight of a cut in the subgraph induced by
$X\subseteq V$ with $\mu^+(X)$ and $\mu^-(X)$, i.e.,
\begin{align*}
  \mu^+(X)=\max\left\{\sum_{ij\in\delta(U_1,U_2)}a_{ij}\ :\ U_1\cup U_2=X,\ U_1\cap
    U_2=\emptyset\right\},\\
\mu^-(X)=\min\left\{\sum_{ij\in\delta(U_1,U_2)}a_{ij}\ :\ U_1\cup U_2=X,\ U_1\cap U_2=\emptyset\right\}.
\end{align*}
We identify $\{0,1\}^n$ with the power set of $V$ in the natural way: $\vect x\in\{0,1\}^n$ is
identified with the set $\{i\ :\ x_i=1\}$.
We start by establishing that the upper bound for $\chgap[b](\vect x)$ in terms of cuts in induced
subgraphs of $G$, proved in~\cite{Luedtke.etal_2012_Someresultsstrength} (Lemma 3.10), is tight.
\begin{lemma}\label{lem:chgap}
  Let $\vect x\in\{0,1/2,1\}^n$ and put $T_1=\{i\in V\ :\ x_i=1\}$ and $T_f=\{i\in V\ :\
  x_i=1/2\}$. Then
  \begin{align}
    \vex[b](\vect x) &=
    a(\gamma(T_1))+\frac12a(\delta(T_1,T_f))+\frac12a(\gamma(T_f))-\frac12\mu^+(T_f),\label{eq:vex_bound}\\
    \cav[b](\vect x) &=
    a(\gamma(T_1))+\frac12a(\delta(T_1,T_f))+\frac12a(\gamma(T_f))-\frac12\mu^-(T_f),\label{eq:cav_bound}\\
    \chgap[b](\vect x) &= \frac12\left(\mu^+(T_f)-\mu^-(T_f)\right).\label{eq:chgap_bound}
  \end{align}
\end{lemma}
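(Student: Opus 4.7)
The plan is to prove (\ref{eq:vex_bound}) and (\ref{eq:cav_bound}); then (\ref{eq:chgap_bound}) follows by subtraction. By the characterization of $\conv(B)$ given in the introduction, $\vex[b](\vect x)$ equals the minimum of $\sum_k\lambda_k\, b(\vect x^k)$ over representations $\vect x=\sum_k\lambda_k\vect x^k$ with $\vect\lambda\in\Delta_{2^n}$. Since $x_i=1$ for $i\in T_1$ and $x_i=0$ for $i\notin T_1\cup T_f$, any $\vect x^k$ with $\lambda_k>0$, viewed as the indicator of a set $U_k$, must satisfy $T_1\subseteq U_k\subseteq T_1\cup T_f$. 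Writing $S_k=U_k\cap T_f$ and identifying the $\lambda_k$ with a probability distribution $\mu$ on $2^{T_f}$, the task reduces to minimizing $\mathbb{E}_\mu[a(\gamma(T_1\cup S))]$ over distributions with marginals $\Pr_\mu[i\in S]=1/2$ for every $i\in T_f$.

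Splitting $\gamma(T_1\cup S)$ into the three disjoint pieces $\gamma(T_1)$, $\delta(T_1,S)$, and $\gamma(S)$, the first contributes the constant $a(\gamma(T_1))$, and the second has expected value $\tfrac12 a(\delta(T_1,T_f))$ since an edge $ij$ with $i\in T_1$ and $j\in T_f$ lies in $\delta(T_1,S)$ exactly when $j\in S$, an event of probability $\tfrac12$. Hence (\ref{eq:vex_bound}) reduces to showing that the minimum of $\mathbb{E}_\mu[a(\gamma(S))]$ over such distributions equals $\tfrac12 a(\gamma(T_f))-\tfrac12\mu^+(T_f)$.

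For this step I use the pairing identity
\[a(\gamma(S))+a(\gamma(T_f\setminus S))=a(\gamma(T_f))-a(\delta(S,T_f\setminus S)),\]
which follows from the disjoint decomposition $\gamma(T_f)=\gamma(S)\cup\gamma(T_f\setminus S)\cup\delta(S,T_f\setminus S)$. A direct expansion shows that the antisymmetric piece $a(\gamma(S))-a(\gamma(T_f\setminus S))$ is an affine function of the indicator of $S$ (all quadratic terms cancel), so its expected value is $0$ under every distribution with marginals $\tfrac12$. Consequently
\begin{align*}
\mathbb{E}_\mu[a(\gamma(S))]
&=\tfrac12\, \mathbb{E}_\mu\bigl[a(\gamma(S))+a(\gamma(T_f\setminus S))\bigr]\\
&=\tfrac12 a(\gamma(T_f))-\tfrac12\, \mathbb{E}_\mu\bigl[a(\delta(S,T_f\setminus S))\bigr]\\
&\geqslant\tfrac12 a(\gamma(T_f))-\tfrac12\mu^+(T_f),
\end{align*}
with equality attained by the two-point distribution assigning mass $\tfrac12$ to each side of a maximum cut $(A^*,T_f\setminus A^*)$. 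This yields (\ref{eq:vex_bound}); identity (\ref{eq:cav_bound}) is proved by the identical argument with $\min$ replaced by $\max$ and $\mu^+$ by $\mu^-$.

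The key observation, which I expect to be the only nontrivial step, is the symmetric/antisymmetric decomposition of $a(\gamma(\cdot))$ with respect to complementation within $T_f$. A priori the feasible distributions form a high-dimensional polytope whose extreme points include more than just two-point distributions supported on complementary pairs, so it is not obvious that the minimum is attained on a single cut; the identity above bypasses any LP-duality argument and reduces the problem cleanly to finding a maximum cut.
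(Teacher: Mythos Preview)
Your proof is correct and takes a genuinely different route from the paper's. Both arguments share the same opening: reduce to distributions on $2^{T_f}$ with marginals $1/2$, peel off the constant $a(\gamma(T_1))$ and the linear term $\tfrac12 a(\delta(T_1,T_f))$, and then analyze $M=\min_\mu\mathbb{E}_\mu[a(\gamma(S))]$. From here the paper proceeds by LP duality: it writes down the dual program for $M$, exhibits the explicit dual solution $y=-\mu^+(T_f)/2$, $z_i=\tfrac12\sum_{j}a_{ij}$, checks feasibility, and matches it against the primal two-point distribution on a maximum cut. Your argument instead exploits the complementation symmetry: because $a(\gamma(S))-a(\gamma(T_f\setminus S))$ is affine in the indicator of $S$, it has mean zero under any $1/2$-marginal distribution, which immediately converts $\mathbb{E}_\mu[a(\gamma(S))]$ into $\tfrac12 a(\gamma(T_f))-\tfrac12\,\mathbb{E}_\mu[a(\delta(S,T_f\setminus S))]$ and reduces the optimization to a single cut. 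This bypasses duality entirely and is more elementary; the paper's approach, on the other hand, is perhaps more mechanical and would generalize more readily if the marginals were not all equal to $1/2$ (where the symmetrization trick would no longer force the antisymmetric part to vanish).
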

\begin{proof}
  We start by writing $\vex[b](\vect x)$ as follows:
  \[\vex[b](\vect x)=\min\left\{\sum_{X\subseteq T_f}\lambda_Xa(\gamma(X\cup T_1))\ :\ \sum_{X\subseteq
      T_f}\lambda_X=1,\ \sum_{X\in\mathcal S_i}\lambda_X=1/2\ \forall i\in T_f,\ \vect\lambda\geqslant\vect 0\right\}.\]
Now 
\[a(\gamma(X\cup T_1))=a(\gamma(T_1))+a(\delta(T_1,X))+a(\gamma(X)),\]
and, for any $\vect\lambda$ satisfying $\sum_{X\in\mathcal S_i}\lambda_X=1/2$ for all $i\in T_f$, we have that
\[
  \sum_{X\subseteq T_f} \lambda_X a(\delta(T_1,X)) = \sum_{X\subseteq T_f}\lambda_X\sum_{i\in X}\sum_{j\in T_1,\, ij\in E} a_{ij} = \sum_{j\in T_1} \sum_{i\in T_f,\, ij\in E} \sum_{X\in\mathcal S_i} \lambda_X a_{ij} = \frac{1}{2}a(\delta(T_1,T_f)).
\]
Thus
\[\vex[b](\vect x)=a(\gamma(T_1))+\frac12a(\delta(T_1,T_f))+M,\]
where
\[M=\min\left\{\sum_{X\subseteq T_f}\lambda_Xa(\gamma(X))\ :\ \sum_{X\subseteq
      T_f}\lambda_X=1,\ \sum_{X\in\mathcal S_i}\lambda_X=1/2\ \forall i\in T_f,\ \vect\lambda \geqslant\vect 0\right\}.\]
As in the proof of Lemma 3.10 in~\cite{Luedtke.etal_2012_Someresultsstrength}, we can set
$\lambda_{U_1}=\lambda_{U_2}=1/2$ for a maximum cut $(U_1,U_2)$ in the subgraph induced by $T_f$, which yields 
\[M\leqslant
\frac12\left[a(\gamma(U_1))+a(\gamma(U_2))\right]=\frac12\left[a(\gamma(T_f))-\mu^+(T_f)\right].\]
In order to prove that this bound is tight, we look at the dual
\[M=\max\left\{y+\frac12\sum_{i\in T_f}z_i\ :\ y+\sum_{i\in X}z_i\leqslant a(\gamma(X))\ \forall X\subseteq T_f\right\}.\]
Setting $y=-\mu^+(T_f)/2$ and
\[z_i=\frac12\sum_{j\in T_f\,:\,ij\in E}a_{ij}\qquad\text{for }i\in T_f\]
we get a feasible solution, because for every $X\subseteq T_f$ we have
\[y+\sum_{i\in X}z_i=-\frac12\mu^+(T_f)+\frac12a(\delta(X,T_f\setminus X))+a(\gamma(X))\leqslant
a(\gamma(X)).\]
Since the objective value
\[y+\frac12\sum_{i\in T_f}z_i=-\frac12\mu^+(T_f)+\frac14\sum_{i\in T_f}\sum_{j\in T_f\,:\,ij\in
  E}a_{ij}=\frac12\left(a(T_f)-\mu^+(T_f)\right)\]
is equal to the upper bound for $M$ we have proved that $M$ is equal to this value, and this
concludes the proof of~\eqref{eq:vex_bound}. For~\eqref{eq:cav_bound} we use the same method to get
\[\cav[b](\vect x)=a(\gamma(T_1))+\frac12a(\delta(T_1,T_f))+M'\annotateText{,}\]
where $M'$ is characterized by 
\begin{align*}
  M' &=\max\left\{\sum_{X\subseteq T_f}\lambda_Xa(\gamma(X))\ :\ \sum_{X\subseteq
      T_f}\lambda_X=1,\ \sum_{X\in\mathcal S_i}\lambda_X=\frac12\ \forall i\in T_f,\ \vect{\lambda}\geqslant\vect 0\right\}\\
 &=\min\left\{y+\frac12\sum_{i\in T_f}z_i\ :\ y+\sum_{i\in X}z_i\geqslant a(\gamma(X))\ \forall X\subseteq T_f\right\}.
\end{align*}
Taking a minimum cut $(U_1',U_2')$ we get a primal solution $\lambda_{U_1'}=\lambda_{U_2'}=1/2$
and a corresponding dual solution $y=-\mu^-(T_f)/2$, 
\[z_i=\frac12\sum_{j\in T_f\,:\,ij\in E}a_{ij}\qquad\text{for }i\in T_f.\]
Finally, \eqref{eq:chgap_bound} follows by taking the difference of ~\eqref{eq:vex_bound}
and~\eqref{eq:cav_bound}. 
\end{proof}
By Lemma 3.9 from~\cite{Luedtke.etal_2012_Someresultsstrength}, we have $\mcgap[b](\vect
x)=\tfrac12\sum_{ij\in\gamma(T_f)}\lvert a_{ij}\rvert$ for all $\vect x\in\{0,1/2,1\}^n$, and
using the convexity argument from the proof of Theorem 3.12
in~\cite{Luedtke.etal_2012_Someresultsstrength} we get the following corollary.
\begin{corollary}\label{cor:zero_half}
Let $c$ be a number such that $\sum_{ij\in\gamma(X)}\lvert a_{ij}\rvert\leqslant
c\left(\mu^+(X)-\mu^-(X)\right)$ for all $X\subseteq V$. Then for all $\vect x\in[0,1]^n$, $\mcgap[b](\vect x)\leqslant c\chgap[b](\vect x)$.\qed
\end{corollary}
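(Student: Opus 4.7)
The plan is to reduce the statement to the discrete grid $\{0,1/2,1\}^n$, on which both gaps admit the closed-form expressions furnished by Lemma~3.9 of~\cite{Luedtke.etal_2012_Someresultsstrength} and by Lemma~\ref{lem:chgap} (specifically identity~\eqref{eq:chgap_bound}) above, and then appeal to the piecewise-linear / convexity argument of Theorem~3.12 in~\cite{Luedtke.etal_2012_Someresultsstrength} (recalled as step~3 in the introduction) to lift the inequality to all of $[0,1]^n$.

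For the discrete step, I would fix $\vect x\in\{0,1/2,1\}^n$ and set $T_f=\{i\in V\,:\,x_i=1/2\}$. Lemma~3.9 of~\cite{Luedtke.etal_2012_Someresultsstrength} yields
\[
  \mcgap[b](\vect x)=\tfrac12\sum_{ij\in\gamma(T_f)}|a_{ij}|,
\]
while identity~\eqref{eq:chgap_bound} gives
\[
  \chgap[b](\vect x)=\tfrac12\bigl(\mu^+(T_f)-\mu^-(T_f)\bigr).
\]
Applying the hypothesis of the corollary to the particular set $X=T_f$ then yields $\mcgap[b](\vect x)\leqslant c\,\chgap[b](\vect x)$ at this $\vect x$. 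Since $T_f$ ranges over \emph{all} subsets of $V$ as $\vect x$ ranges over $\{0,1/2,1\}^n$, the inequality holds at every point of the discrete grid.

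The extension step is handled by invoking step~3 verbatim from~\cite{Luedtke.etal_2012_Someresultsstrength}: the function $c\,\chgap[b](\vect x)-\mcgap[b](\vect x)$ attains its minimum over $[0,1]^n$ at some $\vect x\in\{0,1/2,1\}^n$, and by the previous paragraph this minimum is nonnegative.

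The main obstacle is a bookkeeping one, namely verifying that the reduction cited in step~3 applies to arbitrary real coefficients rather than only to nonnegative ones. This is the case because the reduction rests purely on structural facts that are insensitive to the signs of the $a_{ij}$: $\mcgap[b]$ is piecewise linear with breakpoints at $x_i\in\{0,1/2,1\}$ in each coordinate, and $\chgap[b]=\cav[b]-\vex[b]$ is concave in each coordinate. Given this, the conclusion of the corollary follows directly by combining the discrete bound with the reduction.
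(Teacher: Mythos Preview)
Your proposal is correct and follows essentially the same approach as the paper: the paper derives the corollary directly from Lemma~3.9 of~\cite{Luedtke.etal_2012_Someresultsstrength}, identity~\eqref{eq:chgap_bound} of Lemma~\ref{lem:chgap}, and the convexity argument of Theorem~3.12 in~\cite{Luedtke.etal_2012_Someresultsstrength}, exactly as you outline. Your additional remark verifying that the reduction in step~3 is insensitive to the signs of the $a_{ij}$ is a useful clarification that the paper leaves implicit.
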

\annotateText{\subsection{The lower bound}}\label{subsec:lower_bound}
\begin{proof}[Proof of Theorem~\ref{thm:asymptotics}] 
  Let $G=(V,E)$ be the complete graph on the vertex set $V=\{1,\ldots,n\}$ and consider the bilinear
  function is
  \[b(\vect x)=\sum_{ij\in E}a_{ij}x_ix_j\] 
where the coefficients $a_{ij}$ are randomly chosen from
  $\{1,-1\}$ (independently and uniformly). 
  Using the Chernoff inequality \annotateText{and the fact that $\delta(U_1,U_2) \leqslant n^2/4$ for every cut$(U_1,U_2)$}, we have that,
\[\prob{\left\lvert\sum_{ij\in\delta(U_1,U_2)}a_{ij}\right\rvert>0.6n^{3/2}}<2e^{-0.72n}.\]
Taking the union bound over all $2^{n-1}$ cuts gives
\[\prob{-0.6 n^{3/2}\leqslant\sum_{ij\in\delta(U_1,U_2)}a_{ij}\leqslant 0.6n^{3/2}\text{ for
    all cuts }(U_1,U_2)}\geqslant 1-2^{n}e^{-0.72n}\annotateText{,}\]
which tends to $1$ as $n\to\infty$. So
\[\lim_{n\to\infty}\prob{\mu^+(V)-\mu^-(V)\leqslant 1.2n^{3/2}}=1\annotateText{,}\]
and consequently, for $\vect x=\left(1/2,1/2,\ldots,1/2\right)$, with probability
tending to 1 as $n\to\infty$,
\[\mcgap[b](\vect x)=\frac{\lvert E\rvert}{2}=\frac{n(n-1)}{4}>\frac{\sqrt n}{4}0.6n^{3/2}\geqslant\frac{\sqrt n}{4}\chgap[b](\vect x).\qedhere\]
\end{proof}
\annotateBar{Theorem~\ref{thm:asymptotics} ensures that there are many functions with a large ratio between the
McCormick gap and the convex hull gap. Next we construct an explicit example for every $n$. We
define a bilinear function $b:[0,1]^n\to\reals$ as follows. Let $k=\lceil\log_2(n)\rceil$. With vertex
$i\in V=\{1,\ldots,n\}$ we associate the vector $\vect i=(i_1,\ldots,i_k)\in\{0,1\}^k$ of the digits
of $i-1$ in binary representation, i.e., $i-1=i_12^0+i_22^1+\cdots+i_k2^{k-1}$, and we put
$a_{ij}=(-1)^{\langle\vect i,\vect j\rangle}$, where $\langle\cdot,\cdot\rangle$ is the standard
scalar product, $\langle\vect i,\vect j\rangle=i_1j_1+\cdots+i_kj_k$. The following lemma is a
standard discrepancy result (see for instance Chapter 10
in~\cite{Chazelle-2000-discrepancymethodRandomness}), but for convenience we include the short
proof.
\begin{lemma}\label{lem:discrepancy}
We have $\mu^+(V)\leqslant (n^{3/2})/\sqrt2$ and $\mu^-(V)\geqslant -(n^{3/2})/\sqrt2$. 
\end{lemma}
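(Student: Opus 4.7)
The plan is to recognise the coefficient matrix as a principal submatrix of a Walsh--Hadamard matrix, bound its spectral norm by $\sqrt{2n}$, and then estimate every cut value by a single application of Cauchy--Schwarz.

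First I would set $N=2^k$ (so $N\leqslant 2n$) and work with the full Sylvester--Hadamard matrix $\widetilde H\in\{-1,+1\}^{N\times N}$ defined on indices $\vect i,\vect j\in\{0,1\}^k$ by $\widetilde H_{\vect i,\vect j}=(-1)^{\langle\vect i,\vect j\rangle}$. Its rows are pairwise orthogonal, giving $\widetilde H\widetilde H^{\top}=NI$ and hence spectral norm $\sqrt{N}$. Let $H$ denote the $n\times n$ submatrix of $\widetilde H$ indexed by the vertices $1,\ldots,n$; padding vectors with zeros up to length $N$ shows that the spectral norm of $H$ is at most $\sqrt{N}\leqslant\sqrt{2n}$.

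Next, for any cut $(U_1,U_2)$ of $V$, I would take the indicator vectors $\vect u,\vect v\in\{0,1\}^n$ of $U_1$ and $U_2$. Since $U_1\cap U_2=\emptyset$ the diagonal of $H$ does not contribute to $\vect u^{\top}H\vect v$, so
\[\sum_{ij\in\delta(U_1,U_2)}a_{ij}=\vect u^{\top}H\vect v.\]
Cauchy--Schwarz, the spectral norm bound, and the AM--GM estimate $\lvert U_1\rvert\cdot\lvert U_2\rvert\leqslant n^2/4$ then yield
\[\left\lvert\vect u^{\top}H\vect v\right\rvert\leqslant\sqrt{2n}\cdot\sqrt{\lvert U_1\rvert\cdot\lvert U_2\rvert}\leqslant\frac{n^{3/2}}{\sqrt 2},\]
which gives both claimed bounds after maximising and minimising over cuts. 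The only substantive step is identifying the weight matrix as a submatrix of a Hadamard matrix (and invoking the operator-norm bound $\sqrt N$); once this is in place, the remainder is immediate and no averaging or probabilistic argument is required.
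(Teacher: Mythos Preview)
Your argument is correct and is essentially the same as the paper's: both identify the weight matrix with (a principal submatrix of) the Sylvester--Hadamard matrix of order $2^k$, use its operator norm $\sqrt{2^k}\leqslant\sqrt{2n}$, write the cut value as $\vect u^{\top}H\vect v$, and finish with Cauchy--Schwarz together with $\lvert U_1\rvert\cdot\lvert U_2\rvert\leqslant n^2/4$. The only cosmetic difference is that you pass to the $n\times n$ submatrix and bound its spectral norm by that of the ambient $2^k\times2^k$ matrix, whereas the paper keeps the full matrix and pads the indicator vectors with zeros; these are two ways of phrasing the same estimate.
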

\begin{proof}
Let $H$ be the $2^k\times 2^k$ matrix with rows and columns indexed by binary strings of length $k$
with $H_{ij}=(-1)^{\langle i,j\rangle}$. Then $H$ is a Hadamard matrix, i.e., $H^TH=2^kI$ where $I$
is the identity matrix of size $2^k\times 2^k$. Therefore, $\lVert H\vect v\rVert_2\leqslant
2^{k/2}\lVert\vect v\rVert_2$ for every $\vect v$. The vertices in $V$ correspond to
the first $n$ rows and columns of $H$, and therefore we can identify a subset $U\subseteq V$ with a vector
$\vect u\in\{0,1\}^{2^k}$. For a cut $(U,V\setminus U)$, let $\vect w$ be the vector corresponding to $V\setminus U$. We can
bound the weight of this cut by
\[\left\lvert\sum_{ij\in\delta(U)}a_{ij}\right\rvert=\left\lvert\sum_{i\in U}\sum_{j\in V\setminus U}(-1)^{\langle\vect i,\vect
  j\rangle}\right\rvert=\left\lvert\vect u^TH\vect w\right\rvert\leqslant\lVert\vect u\rVert_2\lVert
H\vect w\rVert_2\leqslant 2^{k/2}\lVert\vect u\rVert_2\lVert
\vect w\rVert_2.\]
Now $(u_1+\cdots+u_{2^k})+(w_1+\cdots+w_{2^k})=n$, and the AM-GM inequality yields 
\begin{multline*}
\lVert\vect u\rVert_2\lVert\vect w\rVert_2=\sqrt{\left(u^2_1+\cdots+u^2_{2^k}\right)\left(w^2_1+\cdots+w^2_{2^k}\right)}
\leqslant \frac{\left(u^2_1+\cdots+u^2_{2^k}\right)+\left(w^2_1+\cdots+w^2_{2^k}\right)}{2}\\ =\frac{\left(u_1+\cdots+u_{2^k}\right)+\left(w_1+\cdots+w_{2^k}\right)}{2} = n/2
\end{multline*}
Consequently,
\[\left\lvert\sum_{ij\in\delta(U)}a_{ij}\right\rvert^2\leqslant 2^{k}n^2/4\leqslant n^3/2.\qedhere\]
\end{proof}

From Lemmas~\ref{lem:chgap} and~\ref{lem:discrepancy} it follows that $\displaystyle\chgap[b](1/2,\ldots,1/2)\leqslant
\frac{n^{3/2}}{\sqrt 2}$, and therefore
\[\mcgap[b](1/2,\ldots,1/2)=\frac{n(n-1)}{4}\geqslant \frac{\sqrt2}{4}\left(\sqrt{n}-\frac{1}{\sqrt{n}}\right)\chgap[b](1/2,\ldots,1/2).\]
So for $n\geqslant 18$ we have $\displaystyle\mcgap[b](1/2,\ldots,1/2)\geqslant\frac{\sqrt{n}}{3}\chgap[b](1/2,\ldots,1/2)$.

\annotateText{\subsection{The upper bound}}\label{subsec:upper_bound}
The unit weight case of Theorem~\ref{thm:large_cuts} has been proved
in~\cite{Erdos.etal_1988_Cuttinggraphtwo}, and here we extend this argument to the general case. We start with a
partition $V=L\cup R$ such that
\begin{align}\label{eq:half}
\sum_{ij\in\delta(L,R)}\lvert a_{ij}\rvert\geqslant\frac12\sum_{ij\in E}\lvert a_{ij}\rvert.
\end{align}
To see why such a partition exists, consider any random partition of vertices into two subsets, where with equal probability each vertex is assigned to any one of the subsets. Taking the edge weights to be $\lvert a_{ij}\rvert$, the expected value of the resulting cut is $\frac12\sum_{ij\in E}\lvert a_{ij}\rvert$. Therefore, there exists a specific partition $V=L\cup R$ which satisfies (\ref{eq:half}).

Now we choose a random subset $S\subseteq L$ ($\prob{i\in S}=1/2$ for every $i\in L$ and these
events are independent). 
\begin{lemma}\label{lem:anticoncentration}
For every $j\in R$, 
\[\prob{\left\lvert\sum_{i\in S}a_{ij}\right\rvert\geqslant \frac{1}{4}\left(\sum_{i\in
        L}a_{ij}^2\right)^{1/2}}\geqslant \frac{1}{24}.\]
\end{lemma}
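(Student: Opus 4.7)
The plan is to prove this anti-concentration bound by a second-and-fourth-moment (Paley--Zygmund) argument combined with a symmetrization step. Fix $j\in R$ and write $a_i=a_{ij}$ for $i\in L$. Set $X=\sum_{i\in S}a_i$, $\mu=\expect{X}=\frac12\sum_{i\in L}a_i$, and $\sigma^2=\sum_{i\in L}a_i^2$. Introducing Rademacher random variables $\eta_i\in\{-1,+1\}$ with $\eta_i=+1$ iff $i\in S$, I can rewrite $X-\mu=\frac12\sum_{i\in L}\eta_ia_i$, which is symmetric about $0$. (The case $\sigma=0$ is trivial, so assume $\sigma>0$.)

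Next, I would compute $\expect{(X-\mu)^2}=\sigma^2/4$ directly from independence, and bound the fourth moment via the standard Rademacher identity $\expect{\left(\sum \eta_i a_i\right)^4}=3\left(\sum a_i^2\right)^2-2\sum a_i^4\leqslant 3\sigma^4$, so that $\expect{(X-\mu)^4}\leqslant 3\sigma^4/16$. Applying the Paley--Zygmund inequality to the nonnegative variable $(X-\mu)^2$ with parameter $\theta=1/4$ then yields
\[\prob{\lvert X-\mu\rvert\geqslant \sigma/4}\geqslant (1-1/4)^2\cdot\frac{(\sigma^2/4)^2}{3\sigma^4/16}=\frac{3}{16}.\]
By the symmetry of $X-\mu$, the disjoint events $\{X-\mu\geqslant\sigma/4\}$ and $\{X-\mu\leqslant -\sigma/4\}$ each have probability at least $3/32$.

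To finish, I would split on the sign of $\mu$: if $\mu\geqslant 0$ then $\prob{X\geqslant\sigma/4}\geqslant \prob{X-\mu\geqslant\sigma/4}\geqslant 3/32$, and if $\mu\leqslant 0$ then symmetrically $\prob{X\leqslant -\sigma/4}\geqslant 3/32$. In either case $\prob{\lvert X\rvert\geqslant \sigma/4}\geqslant 3/32>1/24$, as required. The only potentially awkward point is the presence of the arbitrary mean $\mu$, but this is handled essentially for free by the symmetry of $X-\mu$: translating by $\mu$ can only push one of the two symmetric tails further from the origin, so at least one tail of $X$ still exceeds $\sigma/4$ with the same probability. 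Choosing $\theta=1/4$ in Paley--Zygmund is natural here, since it simultaneously produces the threshold $\sigma/4$ required in the statement and a constant $3/32$ that comfortably exceeds the target $1/24$.
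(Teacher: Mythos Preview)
Your proof is correct and follows essentially the same approach as the paper: decompose $\sum_{i\in S}a_{ij}$ as its mean plus a halved Rademacher sum, apply Paley--Zygmund to the square using the fourth-moment bound $\leqslant 3\sigma^4$, and then use symmetry together with the sign of $\mu$ to pass from a tail bound on $X-\mu$ to one on $X$. The only differences are cosmetic: the paper takes $\theta=1/2$ in Paley--Zygmund (obtaining threshold $\sigma/(2\sqrt2)$ and probability exactly $1/24$) and cites the Khintchine--Haagerup inequality for the fourth moment, whereas you take $\theta=1/4$ and compute the fourth moment directly, landing on the slightly better constant $3/32$.
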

\begin{proof}
Fix $j\in R$, and let $X_i$ for $i\in L$ be the random variable defined by $X_i=1$ if
$i\in S$ and $X_i=-1$ if $i\not\in S$, so that
\[\sum_{i\in S}a_{ij}=\frac12\sum_{i\in L}a_{ij}+\frac12\sum_{i\in L}a_{ij}X_i.\]
For $Z=\left(\sum_{i\in L}a_{ij}X_i\right)^2$, we have $\displaystyle\expect{Z}=\sum_{i\in L}a_{ij}^2$, and therefore
\begin{equation}\label{eq:P-Z}
\prob{Z\geqslant\frac12\sum_{i\in L}a_{ij}^2}\geqslant\frac14\frac{\left(\sum_{i\in
      L}a_{ij}^2\right)^2}{\expect{Z^2}}  
\end{equation}
by the Paley-Zygmund inequality. From the Khintchine inequality with the Haagerup
bounds~\cite{haagerup1981best,nazarov2000ball} it follows that
\[\expect{Z^2}=\expect{\left(\sum_{i\in L}a_{ij}X_i\right)^4}\leqslant 3\left(\sum_{i\in
    L}a_{ij}^2\right)^2,\]
hence~\eqref{eq:P-Z} implies 
\begin{multline*}\label{eq:anticoncentration}
\prob{\sum_{i\in L}a_{ij}X_i\geqslant\frac{1}{\sqrt2}\left(\sum_{i\in
      L}a^2_{ij}\right)^{1/2}}=\prob{\sum_{i\in L}a_{ij}X_i\leqslant-\frac{1}{\sqrt2}\left(\sum_{i\in
      L}a^2_{ij}\right)^{1/2}}\\
=\frac12
\prob{Z\geqslant\frac12\sum_{i\in L}a_{ij}^2}\geqslant\frac{1}{24}.
\end{multline*}
This gives the implications
\begin{align*}
  \sum_{i\in L}a_{ij}\geqslant 0 &\implies \prob{\sum_{i\in S}a_{ij}\geqslant\frac{1}{2\sqrt2}\left(\sum_{i\in
      L}a^2_{ij}\right)^{1/2}}\geqslant\frac{1}{24},\\
  \sum_{i\in L}a_{ij}\leqslant 0 &\implies \prob{\sum_{i\in S}a_{ij}\leqslant-\frac{1}{2\sqrt2}\left(\sum_{i\in
      L}a^2_{ij}\right)^{1/2}}\geqslant\frac{1}{24},
\end{align*}
and thus concludes the proof of the lemma (using $1/4<1/(2\sqrt2)$).
\end{proof}
From Lemma~\ref{lem:anticoncentration} and Cauchy-Schwarz we obtain
\begin{multline*}
\expect{\sum_{j\in R}\left\lvert\sum_{i\in S}a_{ij}\right\rvert}\geqslant\frac{1}{96}\sum_{j\in
  R}\left(\sum_{i\in L}a_{ij}^2\right)^{1/2}\geqslant \frac{1}{96}\sum_{j\in
  R}\left(\frac{1}{\lvert L\rvert^{1/2}}\sum_{i\in L}\lvert a_{ij}\rvert\right)\\
\geqslant \frac{1}{96\sqrt{n}}\sum_{i\in L}\sum_{j\in R}\lvert a_{ij}\rvert
\geqslant \frac{1}{200\sqrt{n}}\sum_{ij\in E}\lvert a_{ij}\rvert,
\end{multline*}
where the last inequality follows from (\ref{eq:half}).
This implies that there exists a set $S\subseteq L$ with
\begin{equation}\label{eq:good_S}
\sum_{j\in R}\left\lvert\sum_{i\in S}a_{ij}\right\rvert\geqslant\frac{1}{200\sqrt{n}}\sum_{ij\in
  E}\lvert a_{ij}\rvert.  
\end{equation}
Fix such a set $S$ and define the sets
\begin{align*}
  R_+&=\left\{j\in R\ :\ \sum_{i\in S}a_{ij}\geqslant 0\right\}, & R_-=\left\{j\in R\ :\ \sum_{i\in S}a_{ij}< 0\right\}.
\end{align*}
Then 
\[\sum_{j\in R}\left\lvert\sum_{i\in S}a_{ij}\right\rvert=\sum_{j\in R_+}\sum_{i\in
  S}a_{ij}-\sum_{j\in R_-}\sum_{i\in S}a_{ij},\]
and it follows from~\eqref{eq:good_S} that
\[\max\left\{\sum_{j\in R_+}\sum_{i\in
  S}a_{ij},\ -\sum_{j\in R_-}\sum_{i\in
  S}a_{ij}\right\}\geqslant\frac{1}{400\sqrt n}\sum_{ij\in
  E}\lvert a_{ij}\rvert.\]
Without loss of generality, we assume that the maximum is obtained by the first term, i.e.,
\[\sum_{j\in R_+}\sum_{i\in
  S}a_{ij}\geqslant\frac{1}{400\sqrt n}\sum_{ij\in
  E}\lvert a_{ij}\rvert.\]
We conclude the proof of Theorem~\ref{thm:large_cuts} as suggested
in~\cite{__Combinatorialdiscrepancysystem}. Let $W=V\setminus(S\cup R_+)$ and distinguish three cases.
\begin{description}
\item[Case 1.] If $\displaystyle\sum_{ij\in\delta(S,W)}a_{ij}\geqslant -\frac{1}{1200\sqrt n}\sum_{ij\in
    E}\lvert a_{ij}\rvert$ then we can take $U=S$:
\[\sum_{ij\in\delta(S)}a_{ij}=\sum_{ij\in\delta(S,R_+)}a_{ij}+\sum_{ij\in\delta(S,W)}a_{ij}\geqslant\left(\frac{1}{400}-\frac{1}{1200}\right)\frac{1}{\sqrt{n}}\sum_{ij\in
    E}\lvert a_{ij}\rvert.\]
\item[Case 2.] If $\displaystyle\sum_{ij\in\delta(R_+,W)}a_{ij}\geqslant -\frac{1}{1200\sqrt n}\sum_{ij\in
    E}\lvert a_{ij}\rvert$ then we can take $U=R_+$:
\[\sum_{ij\in\delta(R_+)}a_{ij}=\sum_{ij\in\delta(S,R_+)}a_{ij}+\sum_{ij\in\delta(R_+,W)}a_{ij}\geqslant\left(\frac{1}{400}-\frac{1}{1200}\right)\frac{1}{\sqrt{n}}\sum_{ij\in
    E}\lvert a_{ij}\rvert.\]
\item[Case 3.] If $\displaystyle\max\left\{\sum_{ij\in\delta(S,W)}a_{ij},\,\sum_{ij\in\delta(R_+,W)}a_{ij}\right\}< -\frac{1}{1200\sqrt n}\sum_{ij\in
    E}\lvert a_{ij}\rvert$ then we can take $U=S\cup R_+$:
\[\sum_{ij\in\delta(S\cup R_+)}a_{ij}=\sum_{ij\in\delta(S,W)}a_{ij}+\sum_{ij\in\delta(R_+,W)}a_{ij}<-\frac{1}{600\sqrt{n}}\sum_{ij\in
    E}\lvert a_{ij}\rvert.\]
\end{description}
\begin{proof}[Proof of Theorem~\ref{thm:upper_bound}]
  Applying Theorem~\ref{thm:large_cuts} to the subgraph induced by a vertex set $X\subseteq V$, yields
\[\mu^+(X)-\mu^-(X)\geqslant\frac{1}{600\sqrt{\lvert X\rvert}}\sum_{ij\in\gamma(X)}\lvert
a_{ij}\rvert\geqslant\frac{1}{600\sqrt{n}}\sum_{ij\in\gamma(X)}\lvert a_{ij}\rvert,\]
and now Corollary~\ref{cor:zero_half} implies the statement of the theorem.  
\end{proof}}

\annotateText{\subsection{Characterization of equality}}\label{subsec:equality} 
\annotateText{As
  mentioned in Section~\ref{sec:results}, Theorem~\ref{thm:equality} is a direct consequence of
  Theorem 3.10 in~\cite{Misener.etal_2014_Dynamicallygeneratedcutting}. We include the following short proof
  in order to show how this result can be derived from the correspondence between the convex hull gap and the
  range of cut weights in the graph~$G$.}
\begin{proof}[Proof of Theorem~\ref{thm:equality}]
  Suppose that every cycle in $G$ has an even number of positive edges and an even number of
  negative edges. Now let $X\subseteq V$ be any vertex set. We introduce two equivalence relations,
  $\sim_1$ and $\sim_2$, on $X$. For the first, we put $i\sim_1 j$ if $G$ contains a path between
  $i$ and $j$ consisting of positive edges. Similarly, we put $i\sim_2 j$ if $G$ contains a path
  between $i$ and $j$ consisting of negative edges. Let $G_1$ and $G_2$ be the quotient graphs,
  i.e., the vertices of $G_k$ ($k=1,2$) are the equivalence classes for $\sim_k$ and there is an
  edge between two classes $[i]$ and $[j]$ in $G_k$ if there is an edge in $G$ between any element
  of $[i]$ and any element of $[j]$. Note that the edges in $G_1$ correspond to negative edges of
  $G$, and the edges in $G_2$ correspond to positive edges of $G$. If every cycle in $G$ contains an
  even number of positive and negative edges, then $G_1$ and $G_2$ are bipartite. The partition of
  $G_1$ induces a partition $X=U_1\cup U_2$ such that $\delta(U_1,U_2)$ is the set of negative edges
  in $\gamma(X)$, and the partition of $G_2$ induces a partition $X=U'_1\cup U'_2$ such that
  $\delta(U'_1,U'_2)$ is the set of positive edges in $\gamma(X)$. Consequently,
  $\mu^+(X)-\mu^-(X)=\sum_{i\in\gamma(X)}\left\lvert a_{ij}\right\rvert$, and, since $X\subseteq V$
  was chosen arbitrarily, it follows, by Corollary~\ref{cor:zero_half}, that $\mcgap[b](\vect
  x)=\chgap[b](\vect x)$ for all $\vect x\in[0,1]^n$.

  Conversely, suppose that there exists a cycle that has an odd number of negative edges. Then any
  cut of $G$ that contains all negative edges in the graph, i.e., that contains the set $E^-=\{ij\in
  E\ :\ a_{ij}<0\}$, must contain at least one positive edge. This implies $\mu^-(V)>\sum_{ij\in
    E^-}a_{ij}$. 
So
\[\mu^+(V)-\mu^-(V)<\sum_{ij\in E}\left\lvert a_{ij}\right\rvert,\]
and consequently, by Lemma~\ref{lem:chgap}, $\chgap[b](1/2,\ldots,1/2)<\mcgap[b](1/2,\ldots,1/2)$. The
argument for a cycle with an odd number of positive edges is similar.
\end{proof} 
Theorem~\ref{thm:equality} implies that for functions without negative coefficients we have
$Q=\conv(B)$ if and only if $G$ is bipartite, where the ``if''-part of this equivalence follows from
Theorem 3.10 in~\cite{Luedtke.etal_2012_Someresultsstrength}. In contrast, without restricting the
signs of the coefficients bipartiteness does not help. The probabilistic argument in the proof of
Theorem~\ref{thm:asymptotics} also works for the complete bipartite graph with equal parts and
yields that in this setting almost all functions $b$ with coefficients in $\{1,-1\}$ have
$\mcgap[b](\vect x)\geqslant\tfrac{\sqrt n}{8}\chgap[b](\vect x)$ for $\vect
x=\left(1/2,\ldots,1/2\right)$.

\section*{Acknowledgments}
This research was supported by the ARC Linkage Grant no. LP110200524, Hunter Valley Coal Chain Coordinator (\href{http://www.hvccc.com.au}{hvccc.com.au}) and Triple Point Technology (\href{http://www.tpt.com}{tpt.com}).

We thank Jeff Linderoth and James Luedtke for fruitful discussions of the topics presented in this
paper, both during a visit of Jeff \annotateText{Linderoth} to Newcastle, Australia, and at the 22\textsuperscript{nd} ISMP
in Pittsburgh. We also thank Aleksandar Nikolov for \annotateText{pointing us to the ``old arguments by Spencer and Erd\H{o}s'' used in the proof of Theorem~\ref{thm:large_cuts} (see~\cite{__Combinatorialdiscrepancysystem}).}

 
\end{document}